\setlist{leftmargin=*}
\setlist{topsep=1pt plus 0pt minus 1pt,partopsep=0pt,itemsep=1pt plus 0pt minus 1pt,parsep=0.1\parskip}
\definecolor{MyBlue}{cmyk}{1,0.13,0,0.63}
\definecolor{MyGreen}{cmyk}{0.91,0,0.88,0.52}
\newcommand{\mylinkcolor}{MyBlue}
\newcommand{\mycitecolor}{MyGreen}
\newcommand{\myurlcolor}{webbrown}
\def\@endtheorem{\endtrivlist}% NEW
\theoremstyle{plain}
\newtheorem{thm}{Theorem}[section]
\newtheorem{lem}[thm]{Lemma}
\newtheorem{prop}[thm]{Proposition}
\theoremstyle{definition}
\newtheorem{defn}[thm]{Definition}
\newtheorem{remark}[thm]{Remark}
\renewcommand{\eqref}[1]{\labelcref{#1}}
\crefname{thm}{Theorem}{Theorems}
\crefname{lem}{Lemma}{Lemmas}
\crefname{prop}{Proposition}{Propositions}
\crefname{coro}{Corollary}{Corollaries}
\crefname{defn}{Definition}{Definitions}
\crefname{example}{Example}{Examples}
\crefname{remark}{Remark}{Remarks}
\newcommand{\sD}{\slashed{D}}
\newcommand{\Z}{\mathbb{Z}}
\newcommand{\R}{\mathbb{R}}
\newcommand{\C}{\mathbb{C}}
\newcommand{\mH}{\mathcal{H}}
\newcommand{\D}{\mathcal{D}}
\newcommand{\A}{\mathcal{A}}
\newcommand{\E}{\mathcal{E}}
\newcommand{\mL}{\mathcal{L}}
\newcommand{\mJ}{\mathcal{J}}
\DeclareMathOperator{\End}{End}
\DeclareMathOperator{\Dom}{Dom}
\newcommand{\bundlefont}[1]{#1}%{{\mathtt{#1}}}
\newcommand{\bS}{\bundlefont{S}}
\newcommand{\la}{\langle}
\newcommand{\ra}{\rangle}
\newcommand{\into}{\hookrightarrow}
\renewcommand{\Re}{\mathop{\textnormal{Re}}}
\renewcommand{\Im}{\mathop{\textnormal{Im}}}
\renewcommand{\bar}[1]{\overline{#1}}
\newcommand{\mvert}{\,|\,}
\newcommand{\bigmvert}{\,\big|\,}
\newcommand{\mattwo}[4]{
  \left(\!\!\!\begin{array}{c@{~}c}#1&#2\\#3&#4\\\end{array}\!\!\!\right)
}
\title{
\textit{Addendum to}\\%
Indefinite Kasparov modules and pseudo-Riemannian manifolds%
}
\author{
Koen van den Dungen%
\footnote{Email: \texttt{kdungen@uni-bonn.de}}
\\[4mm]
{\normalsize 
Mathematisches Institut}, 
{\normalsize Universit\"at Bonn}\\
{\normalsize Endenicher Allee 60, D-53115 Bonn}\\[2mm]
}
\begin{document}
\date{}
\maketitle

\begin{abstract}
We improve our previous results on indefinite Kasparov modules, which provide a generalisation of unbounded Kasparov modules modelling non-symmetric and non-elliptic (e.g.\ hyperbolic) operators. 
In particular, we can weaken the assumptions that are imposed on indefinite Kasparov modules. Using a new theorem by Lesch and Mesland on the self-adjointness and regularity of the sum of two weakly anticommuting operators, we show that we still have an equivalence between indefinite Kasparov modules and pairs of Kasparov modules. Importantly, the weakened version of indefinite Kasparov modules now includes the main motivating example of the Dirac operator on a pseudo-Riemannian manifold. 
The appendix contains a construction of an approximate identity for weakly commuting operators, which is due to Lesch and Mesland. 

\vspace{\baselineskip}
\noindent
\emph{Keywords}: $KK$-theory; Lorentzian manifolds; noncommutative geometry.

\noindent
\emph{Mathematics Subject Classification 2010}: 19K35, 53C50, 58B34. 
\end{abstract}

\section{Introduction}

In a previous paper \cite{vdDR16}, we presented a definition of indefinite Kasparov modules, providing a generalisation of unbounded Kasparov modules modelling non-symmetric and non-elliptic (e.g.\ hyperbolic) operators. 
Our main theorem showed that to each indefinite Kasparov module we can associate a pair of (genuine) Kasparov modules, and that this process is reversible. 
The main assumption we imposed in the definition of an indefinite Kasparov module $(\A,E_B,\D)$ is that $\Re\D$ and $\Im\D$ almost anticommute. This means, roughly speaking, that the anticommutator $\{\Re\D,\Im\D\}$ is relatively bounded by $\Re\D$. The main tool we used is a theorem by Kaad and Lesch \cite[Theorem 7.10]{KL12}, which states that the sum of two almost anticommuting operators is regular and self-adjoint. 
The purpose of this short note is to improve on the results presented in \cite{vdDR16}. 

The main issue with the results of \cite{vdDR16} is that, unfortunately, our main motivating example, namely the Dirac operator $\sD$ on a pseudo-Riemannian 
manifold, does not (in general) satisfy the definition of an indefinite Kasparov module. For such a Dirac operator, the real part $\Re\sD$ contains the spacelike derivatives, while the imaginary part $\Im\sD$ contains the timelike derivatives. The assumption that $\Re\sD$ and $\Im\sD$ almost anticommute then means that the anticommutator $\{\Re\sD,\Im\sD\}$ contains only spacelike derivatives. In general, however, this anticommutator is a first-order differential operator containing both spacelike and timelike derivatives. 

Thus, in order to improve our results, we need a generalisation of the theorem by Kaad and Lesch, in which the anticommutator $\{\Re\D,\Im\D\}$ is allowed to be relatively bounded by the `combined graph norm' of $\Re\D$ and $\Im\D$. This generalisation is now available thanks to recent work by Lesch and Mesland \cite{LM19}. 
Hence we can weaken the assumptions that we imposed on indefinite Kasparov modules, while the main results in \cite{vdDR16} remain valid. 
The main advantage, for our purposes, is that the weakened definition of indefinite Kasparov modules is naturally satisfied by a Dirac operator on a pseudo-Riemannian manifold (under only mild conditions), since $\{\Re\sD,\Im\sD\}$ is always a first-order differential operator. 

In Section \ref{sec:weakly_anticommuting}, we will review the results of Lesch and Mesland \cite{LM19}. 
Moreover, we will show that the sum of two weakly commuting (instead of anticommuting) operators is also essentially self-adjoint (though in general not closed). 
The proof of this fact relies on an alternative method of proof for the main result of \cite{LM19}, which is also due to Lesch and Mesland, and which is included in the appendix. 
In Section \ref{sec:Dirac_decomp}, we will describe a natural example of weakly anticommuting operators, by decomposing the Dirac operator on a Riemannian spin manifold with a given orthogonal direct sum decomposition of the tangent bundle. 
Finally, in Section \ref{sec:indefinite}, we will show that we can weaken the assumptions in the definitions given in \cite{vdDR16}, while all the results of \cite{vdDR16} remain valid.

\subsection*{Acknowledgements}

The author thanks Matthias Lesch and Bram Mesland for interesting discussions, helpful suggestions, and for kindly allowing the author to reproduce their proof of Proposition \ref{prop:approx_id} in the Appendix. 
The author also thanks the referees, whose comments have led to significant improvements in the article.

\section{Weakly (anti)commuting operators}
\label{sec:weakly_anticommuting}

We consider regular self-adjoint operators $S$ and $T$ on a Hilbert $B$-module $E$ (where $B$ is a $C^*$-algebra), such that $\Dom S\cap\Dom T$ is dense. For $x,y\in\Dom S\cap\Dom T$, we define the \emph{`combined graph inner product'}
\[
\la x | y \ra_{S,T} := \la x|y\ra + \la Sx|Sy\ra + \la Tx|Ty\ra ,
\]
and denote the corresponding norm by $\|\cdot\|_{S,T}$. 

We denote by $[S,T]_\pm$ the (anti)commutator $ST\pm TS$, which is defined on the natural \emph{initial} domain 
\[
\Dom([S,T]_\pm) = \big\{ x\in\Dom S\cap\Dom T : Sx\in\Dom T \;\&\; Tx\in\Dom S \big\} .
\]

Rather than the notion of almost (anti)commuting operators given in \cite[Definition 2.8]{vdDR16} (which was based on \cite[Assumption 7.1]{KL12}), we will now consider the following (weaker) notion of weakly (anti)commuting operators. 

\begin{defn}[{\cite[Definition 2.1]{LM19}}]
\label{defn:weakly_(anti)commuting}
Two regular self-adjoint operators $S$ and $T$ on a Hilbert $B$-module $E$ are called \emph{weakly (anti)\-com\-muting} if 
\begin{enumerate}
\item there is a constant $C>0$ such that for all $x\in\Dom([S,T]_\pm)$ we have 
\[
\big\la [S,T]_\pm x \mvert [S,T]_\pm x \big\ra \leq C \la x|x\ra_{S,T} ;
\]
\item there is a core $\E\subset\Dom T$ such that $(S+\lambda)^{-1}(\E) \subset \Dom[S,T]_\pm$ for $\lambda\in i\R$, $|\lambda|\geq\lambda_0>0$. 
\end{enumerate}
\end{defn}

Note that, although the formulation of the second assumption is asymmetric in $S$ and $T$, it follows a posteriori that the assumption is also satisfied with $S$ and $T$ interchanged \cite[\S3]{LM19}. Moreover, the second assumption also holds with $\E = \Dom T$ \cite[Proposition 3.5]{LM19}. 
The main result of \cite{LM19} is the following:

\begin{thm}[{\cite[Theorem 2.6]{LM19}}]
\label{thm:sum_sa}
Let $S$ and $T$ be weakly anticommuting operators on a Hilbert $B$-module $E$. 
Then the operator $S+T$ is regular and self-adjoint on the domain $\Dom(S+T) = \Dom S\cap\Dom T$. 
\end{thm}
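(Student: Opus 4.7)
The plan is to show that $D := S+T$, initially defined on $\Dom S \cap \Dom T$, is regular and self-adjoint by verifying that $D \pm i\mu$ has full range on this domain for some (equivalently, all sufficiently large) $\mu > 0$. That $D$ is symmetric is immediate from the self-adjointness of $S$ and $T$ individually, so closability is automatic; the whole question reduces to trivialising the deficiency spaces and exhibiting adjointable inverses.

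First I would use assumption (2) to construct a regularising family. For $\lambda \in i\R$ with $|\lambda| \geq \lambda_0$, set $R_\lambda := (S+\lambda)^{-1}$, an adjointable contraction with $\|R_\lambda\| \leq 1/|\lambda|$, so that $|\lambda| R_\lambda$ converges strongly to the identity. By assumption (2), $R_\lambda$ maps the core $\E$ — which may in fact be taken to be all of $\Dom T$ — into $\Dom[S,T]_+ \subset \Dom S \cap \Dom T$. This means every $x \in \Dom S \cap \Dom T$ should be approachable in graph norm by vectors in $\Dom[S,T]_+$, which is where commutator manipulations and assumption (1) actually apply.

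Next, on $\Dom[S,T]_+$ the algebraic identity
\[
D^2 + \mu^2 \;=\; S^2 + T^2 + [S,T]_+ + \mu^2
\]
holds, together with the factorisation $D^2 + \mu^2 = (D - i\mu)(D + i\mu)$ coming from symmetry of $D$. The operator $S^2 + T^2$ is manifestly non-negative, and assumption (1) gives $\|[S,T]_+ x\|^2 \leq C \la x | x \ra_{S,T}$. Since $\|Sx\|^2 + \|Tx\|^2 \leq \|x\|\,\|(S^2+T^2)x\|$ by the module Cauchy--Schwarz inequality (for $x\in\Dom(S^2)\cap\Dom(T^2)$), a standard $ab \leq \tfrac{\epsilon}{2} a^2 + \tfrac{1}{2\epsilon} b^2$ estimate shows that for every $\epsilon > 0$ there is $C_\epsilon > 0$ with
\[
\|[S,T]_+ x\| \;\leq\; \epsilon \bigl\|(S^2+T^2)x\bigr\| + C_\epsilon \|x\|,
\]
so $[S,T]_+$ has relative bound zero with respect to $S^2 + T^2$. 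A Kato--Rellich style argument in the Hilbert module setting then produces an adjointable inverse for $S^2 + T^2 + [S,T]_+ + \mu^2$ for large $\mu$, so that both factors $D \pm i\mu$ have dense range on $\Dom[S,T]_+$, and hence full range on $\Dom S \cap \Dom T$ after the approximation step.

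The main obstacle I foresee is twofold. First, because we work over a Hilbert $C^*$-module rather than a Hilbert space, there is no spectral calculus to invoke: the analogues of Kato--Rellich, interpolation, and the identification of closures all have to be reconstructed from direct resolvent and range estimates that preserve adjointability. Second, and most delicately, one must transfer the squared identity from the small domain $\Dom[S,T]_+$ up to $\Dom S \cap \Dom T$; making this rigorous requires careful control over the approximate identity $|\lambda| R_\lambda$ and, crucially, over its commutator with $T$, which is presumably the technical content of Proposition \ref{prop:approx_id} in the appendix and the real heart of the theorem.
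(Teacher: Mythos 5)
The theorem you are proving is quoted from \cite{LM18pre} and not proved in full in this paper; the paper only sketches two routes. The first (detailed in the Appendix) constructs the approximate identity $A_n = -n^2(S-in)^{-1}(T-in)^{-1}$, uses Proposition~\ref{prop:approx_id} to show that $[S\pm iT, A_n]\to 0$ strongly, deduces $(S\pm iT)^* = (S\mp iT)$, and then transfers to $S+T$ via a doubling trick plus the local-global principle for regularity. The second (the actual proof in \cite{LM18pre}) approximates the resolvent $(S+T+\mu)^{-1}$ by $(S+T+\lambda^{-1}ST+\mu)^{-1}$. Your proposed route — factorising $D^2+\mu^2 = S^2+T^2+[S,T]_+ +\mu^2$ and treating $[S,T]_+$ as a Kato--Rellich perturbation of $S^2+T^2$ — is neither of these, and it runs into obstructions that are precisely what forced the more elaborate arguments.

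The essential gap is that you take for granted that $S^2+T^2$ is a regular self-adjoint operator on $\Dom S^2\cap\Dom T^2$. In a Hilbert $C^*$-module the sum of two positive regular self-adjoint operators need not be regular, nor need it be self-adjoint on the naive domain; establishing this is at least as hard as the theorem itself, and is not supplied by the hypotheses. (This is exactly why \cite{LM18pre} work directly with $S,T$ and resolvent approximations rather than with $S^2+T^2$.) A second, equally serious problem is a domain mismatch that defeats the Kato--Rellich scheme even if $S^2+T^2$ were under control: Kato--Rellich requires the perturbation $[S,T]_+$ to be defined on all of $\Dom(S^2+T^2)=\Dom S^2\cap\Dom T^2$, but that set demands $Sx\in\Dom S$ and $Tx\in\Dom T$, whereas $\Dom[S,T]_+$ demands $Sx\in\Dom T$ and $Tx\in\Dom S$; neither contains the other, so your relative-bound inequality and your algebraic identity $D^2+\mu^2=S^2+T^2+[S,T]_+ +\mu^2$ are never simultaneously valid on the domain you need. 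You flag the ``transfer from the small domain up to $\Dom S\cap\Dom T$'' as the hard step, but the difficulty is already earlier: there is no common domain on which the Kato--Rellich machinery is set up. A strategy along your lines is essentially the one that works under the stronger ``almost anticommuting'' hypothesis of \cite{KL12} (where the anticommutator is $S$-bounded, so one can absorb it against $S^2$ alone); under the weaker ``weakly anticommuting'' hypothesis the bound is only against the combined graph norm, and that is exactly what breaks the square-and-perturb approach and makes the approximate-identity or modified-resolvent arguments necessary.
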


This theorem can be proved in (at least) two different ways. 
One method is based on the following proposition, which is also due to Matthias Lesch and Bram Mesland (but is not included in \cite{LM19}). The proof of this proposition is given in the Appendix. 

\begin{prop}
\label{prop:approx_id}
Let $S$ and $T$ be weakly commuting operators on a Hilbert $B$-module $E$. 
Then $\lambda^2 \big[ S , (S+\lambda)^{-1} (T+\lambda)^{-1} \big]_-$ and $\lambda^2 \big[ T , (S+\lambda)^{-1} (T+\lambda)^{-1} \big]_-$ are uniformly bounded (for $\lambda\in i\R$, $|\lambda|\geq\lambda_0$) and converge strongly to zero as $\lambda\to\pm i\infty$. 
\end{prop}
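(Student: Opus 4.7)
The natural approach is to reduce the commutator to a product of resolvents and the weakly defined commutator $[S,T]_-$, using the identity
\[
[S,(S+\lambda)^{-1}(T+\lambda)^{-1}] = -(S+\lambda)^{-1}(T+\lambda)^{-1}[S,T]_-(T+\lambda)^{-1},
\]
which follows on a suitable dense initial domain from $[S,(S+\lambda)^{-1}]=0$ and the standard resolvent-commutator formula $[S,(T+\lambda)^{-1}] = -(T+\lambda)^{-1}[S,T]_-(T+\lambda)^{-1}$. Multiplying by $\lambda^2$ and regrouping,
\[
\lambda^2\bigl[S,(S+\lambda)^{-1}(T+\lambda)^{-1}\bigr] = -\bigl(\lambda(S+\lambda)^{-1}\bigr)\bigl(\lambda(T+\lambda)^{-1}\bigr)\cdot [S,T]_- \cdot (T+\lambda)^{-1},
\]
and for $\lambda\in i\R$ both outer factors have operator norm $\leq 1$. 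By weak commutation condition~(1), $[S,T]_-$ extends by continuity to a bounded operator $\overline{[S,T]}$ from $(\Dom S\cap\Dom T,\|\cdot\|_{S,T})$ to $E$, with condition~(2) supplying a dense core for the graph norm.

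The crucial step is a self-consistency estimate for $\|S(T+\lambda)^{-1}y\|$. On a dense subset of $\Dom S$ where $(T+\lambda)^{-1}y\in\Dom S$, writing
\[
S(T+\lambda)^{-1}y = (T+\lambda)^{-1}Sy - (T+\lambda)^{-1}\overline{[S,T]}(T+\lambda)^{-1}y
\]
and applying condition~(1) to the last term yields an implicit inequality for $a := \|S(T+\lambda)^{-1}y\|$, which solves to
\[
\|S(T+\lambda)^{-1}y\| \leq C'|\lambda|^{-1}(\|Sy\|+\|y\|),\qquad y\in\Dom S,\ |\lambda|\geq\lambda_0.
\]
Together with $\|(T+\lambda)^{-1}y\|\leq|\lambda|^{-1}\|y\|$ and $\|T(T+\lambda)^{-1}y\|\leq\|y\|$, this controls $\|(T+\lambda)^{-1}y\|_{S,T}$.

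Uniform boundedness is now obtained via a duality argument: since $\bar\lambda=-\lambda$ for $\lambda\in i\R$, the adjoint equals (up to sign) $\lambda^2(T-\lambda)^{-1}\overline{[S,T]}(T-\lambda)^{-1}(S-\lambda)^{-1}$; applied to $y\in E$, this sends $y$ first to $w:=(S-\lambda)^{-1}y\in\Dom S$ with $\|w\|\leq|\lambda|^{-1}\|y\|$ and $\|Sw\|\leq 2\|y\|$, after which the self-consistency bound gives $\|S(T-\lambda)^{-1}w\|\leq 3C'\|y\|/|\lambda|$, hence $\|(T-\lambda)^{-1}w\|_{S,T}\leq C''\|y\|/|\lambda|$, and finally $\|\lambda^2(T-\lambda)^{-1}\overline{[S,T]}(T-\lambda)^{-1}w\|\leq C'''\|y\|$ uniformly in $\lambda$. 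Strong convergence to zero is verified on the dense subset $\Dom S\cap\Dom T$ directly: for $y$ there, each of $\|(T+\lambda)^{-1}y\|$, $\|T(T+\lambda)^{-1}y\|=\|(T+\lambda)^{-1}Ty\|\leq|\lambda|^{-1}\|Ty\|$, and $\|S(T+\lambda)^{-1}y\|\leq C'|\lambda|^{-1}(\|y\|+\|Sy\|)$ tend to $0$ as $|\lambda|\to\infty$, so $\|\overline{[S,T]}(T+\lambda)^{-1}y\|\to 0$ and hence $\|\lambda^2[S,(S+\lambda)^{-1}(T+\lambda)^{-1}]y\|\to 0$; uniform boundedness propagates this to all of $E$. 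The statement for $\lambda^2[T,(S+\lambda)^{-1}(T+\lambda)^{-1}]$ follows symmetrically. The principal obstacle is the self-consistency step itself, which requires justifying that $(T+\lambda)^{-1}$ preserves a sufficiently rich subset of $\Dom S$ for the implicit inequality to be solved rigorously; this is where condition~(2) is essential.
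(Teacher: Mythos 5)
Your overall strategy closely parallels the paper's proof: factor the commutator as resolvents times $[S,T]$, establish a decay estimate of the type $\|[S,T]\psi\|=O(|\lambda|^{-1})$ on the range of the double resolvent, use a duality trick with $\bar\lambda=-\lambda$ for uniform boundedness, and then check strong convergence on a dense submodule. The one genuine difference is that you re-derive the key quantitative estimate (your ``self-consistency'' bound $\|S(T+\lambda)^{-1}y\|\leq C'|\lambda|^{-1}(\|Sy\|+\|y\|)$) from condition~(1) by solving an implicit inequality, whereas the paper simply cites \cite[Lemma 3.2]{LM18pre} for the equivalent estimate $\|[S,T]\psi\|\leq c(|\lambda|^{-1}+|\mu|^{-1})\|(T+\mu)(S+\lambda)\psi\|$. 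Your self-consistency derivation is correct arithmetically (for $|\lambda|$ large enough to absorb the $\sqrt{C}/|\lambda|$ factor), and is a nice elementary substitute.

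However, there is a gap, and you flag it yourself without closing it: the self-consistency step requires knowing that $(T+\lambda)^{-1}$ maps a dense (and sufficiently rich) subset of $\Dom S$ into $\Dom[S,T]$, so that the commutator manipulation $S(T+\lambda)^{-1}y = (T+\lambda)^{-1}Sy - (T+\lambda)^{-1}[S,T](T+\lambda)^{-1}y$ is legitimate and the implicit inequality can be solved rigorously. Condition~(2) as stated gives the opposite: $(S+\lambda)^{-1}(\E)\subset\Dom[S,T]$ for a core $\E$ of $\Dom T$. The version you need is condition~(2) with $S$ and $T$ interchanged. This is true, but only \emph{a posteriori}: it is established in \cite[\S3, Prop.~3.5]{LM18pre} as part of the development, and cannot simply be assumed. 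The paper handles this cleanly by first proving (its Lemma~A.2) the statement $[S,T](S+\lambda)^{-1}(T+\mu)^{-1}\to 0$ strongly, which only uses condition~(2) in the given orientation, and then invoking the established symmetry from \cite{LM18pre} for the second half. Your sketch needs to either (a) cite the symmetric version of condition~(2) explicitly, or (b) rearrange the argument so that the $T$-commutator (which only needs condition~(2) as stated) is proved first, and the $S$-commutator is deduced by adjoints as in the paper's Lemma~A.3. Also a small slip: for $\lambda\in i\R$ and $S$ self-adjoint, $\|S(S-\lambda)^{-1}\|\leq 1$, so the ``$\leq 2\|y\|$'' in your duality step is unnecessary (though harmless).
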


Given two weakly commuting operators $S$ and $T$, one may check that $S\pm iT$ are closed operators. Thanks to Proposition \ref{prop:approx_id}, we have an approximate identity $A_n := -n^2 (S-in)^{-1} (T-in)^{-1}$ such that $[S\pm iT,A_n]$ converges strongly to zero as $n\to\infty$. By a standard argument (analogous to the proof of Proposition \ref{prop:sum_ess_sa} below) it then follows that $(S\pm iT)^* = (S\mp iT)$. Using a doubling trick (\emph{cf.}\ \cite[\S2.4]{LM19}), this result can then be translated to the self-adjointness of the sum $S+T$ in case of two weakly \emph{anti}commuting operators $S$ and $T$. Finally, one can apply the local-global principle \cite{Pie06,KL12} to prove regularity of $S+T$. 

The proof of Theorem \ref{thm:sum_sa} given in \cite{LM19} is different, and in fact proves a stronger statement. Indeed, the proof in \cite{LM19} not only shows that $S+T$ is regular self-adjoint, but also that the resolvent $(S+T+\mu)^{-1}$ (with $\mu\in i\R$) can be approximated by $(S+T+\lambda^{-1}ST+\mu)^{-1}$ as $|\lambda|\to\infty$ ($\lambda\in i\R$). 

The advantage of the method via Proposition \ref{prop:approx_id} is that it also allows us to prove that the sum of two weakly \emph{commuting} operators (instead of anticommuting operators) is \emph{essentially} self-adjoint. (Note that the sum of weakly commuting operators is in general not closed; the obvious example is $T=-S$.) 

\begin{prop}
\label{prop:sum_ess_sa}
Let $S$ and $T$ be weakly commuting operators on a Hilbert $B$-module $E$. 
Then $S+T$ is essentially self-adjoint on $\Dom S\cap\Dom T$. 
\end{prop}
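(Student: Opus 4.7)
The plan is to use the approximate identity $A_n := -n^2 (S-in)^{-1} (T-in)^{-1}$, in the spirit of the sketch given just before the proposition. I would first collect the relevant properties of $A_n$. By condition (2) of Definition \ref{defn:weakly_(anti)commuting}, $A_n$ maps $E$ into $\Dom S \cap \Dom T$. The standard bound $\|n(S-in)^{-1}\|\le 1$ (valid by bounded functional calculus for regular self-adjoint operators) yields $\|A_n\|\le 1$, and the identity $n(S-in)^{-1} x = ix - i(S-in)^{-1}Sx$ on $\Dom S$ gives $n(S-in)^{-1} \to iI$ strongly, hence $A_n$ and $A_n^*$ converge strongly to the identity. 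By Proposition \ref{prop:approx_id}, the commutators $[S,A_n]$ and $[T,A_n]$ extend to uniformly bounded operators $\til{[S,A_n]}$ and $\til{[T,A_n]}$ that converge strongly to zero; applying Proposition \ref{prop:approx_id} with $S$ and $T$ interchanged (permissible by the symmetry noted after Definition \ref{defn:weakly_(anti)commuting}) and with $\lambda=in$ in place of $\lambda=-in$, the analogous statements hold for $A_n^*$ in place of $A_n$.

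Now let $\eta\in\Dom(S+T)^*$; the goal is to show that $A_n^*\eta\in\Dom(S+T)$ approximates $\eta$ in the graph norm of $(S+T)^*$. Convergence $A_n^*\eta\to\eta$ is immediate. For the derivative component, for an arbitrary $\zeta\in\Dom(S+T)$ I would compute
\begin{align*}
\la (S+T)A_n^*\eta \mvert \zeta\ra
 &= \la A_n^*\eta \mvert (S+T)\zeta\ra = \la \eta \mvert A_n(S+T)\zeta\ra \\
 &= \la \eta \mvert (S+T)A_n\zeta\ra - \la \eta \mvert [S+T,A_n]\zeta\ra \\
 &= \la A_n^*(S+T)^*\eta \mvert \zeta\ra - \la B_n^*\eta \mvert \zeta\ra ,
\end{align*}
where $B_n := \til{[S,A_n]}+\til{[T,A_n]}$; the first line uses symmetry of $S+T$ on $\Dom S\cap\Dom T$ together with $A_n^*\eta\in\Dom S\cap\Dom T$, and the third line uses the defining property of $(S+T)^*$ on $A_n\zeta\in\Dom(S+T)\subset\Dom(S+T)^*$. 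Density of $\Dom(S+T)$ in $E$ then yields the identity
\[
(S+T)A_n^*\eta = A_n^*(S+T)^*\eta - B_n^*\eta .
\]

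The main subtlety I anticipate is the strong convergence $B_n^*\eta\to 0$: in general, strong convergence $B_n\to 0$ does not imply strong convergence of the adjoints. However, the product-of-resolvents structure of $A_n$ permits a direct computation on $\Dom S$ showing $\la \til{[S,A_n]}y \mvert z\ra = -\la y \mvert [S,A_n^*]z\ra$ for $y,z\in\Dom S$, whence by density $\til{[S,A_n]}^* = -\til{[S,A_n^*]}$ as bounded operators on $E$, and analogously for $T$. Hence $B_n^* = -\til{[S,A_n^*]}-\til{[T,A_n^*]}$, which converges strongly to zero by the version of Proposition \ref{prop:approx_id} for $A_n^*$ established above. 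Combined with $A_n^*(S+T)^*\eta\to(S+T)^*\eta$, this gives $(S+T)A_n^*\eta\to(S+T)^*\eta$. Therefore $\eta$ lies in the graph-norm closure of $\Dom(S+T)$ and $\overline{S+T}$ extends $(S+T)^*$; since the reverse inclusion $\overline{S+T}\subset(S+T)^*$ always holds for densely defined symmetric operators, equality follows, establishing essential self-adjointness of $S+T$ on $\Dom S\cap\Dom T$.
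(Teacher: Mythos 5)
Your proof is correct and follows essentially the same approximate-identity argument as the paper's. The only cosmetic difference is that you apply $A_n^*$ to $\eta\in\Dom(S+T)^*$ rather than $A_n$, which leaves the error term $B_n^*\eta$ and forces you to invoke Proposition \ref{prop:approx_id} with the roles of $S$ and $T$ interchanged; the paper sets $\xi_n = A_n\xi$, obtains the error term $[S+T,A_n^*]^*\xi = -\,\overline{[S+T,A_n]}\,\xi$ via the same adjoint-of-commutator identity you use, and then applies Proposition \ref{prop:approx_id} directly, so both arguments resolve the subtlety you correctly flag (that strong convergence of $B_n$ need not transfer to $B_n^*$) in essentially the same way.
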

\begin{proof}
Since $S+T$ is symmetric, it suffices to prove that $\Dom(S+T)^* \subset \Dom(\bar{S+T})$. 
Let $\xi\in\Dom(S+T)^*$. 
Using the approximate identity $A_n := -n^2 (S-in)^{-1} (T-in)^{-1}$, we define the sequence 
\[
\xi_n := A_n \xi \in \Dom(S+T) ,
\]
which converges in norm to $\xi$ (see Lemma \ref{lem:approx_id_1}). For $\eta\in\Dom(S+T)$, we can calculate
\begin{align*}
\la\xi_n|(S+T)\eta\ra 
&= \big\la A_n \xi \bigmvert (S+T) \eta \big\ra 
= \big\la \xi \bigmvert A_n^* (S+T) \eta \big\ra \\
&= \big\la \xi \bigmvert (S+T) A_n^* \eta \big\ra - \big\la \xi \bigmvert \big[ S+T , A_n^* \big] \eta \big\ra \\
&= \big\la A_n (S+T)^* \xi \bigmvert \eta \big\ra - \big\la \big[ S+T , A_n^* \big]^* \xi \bigmvert \eta \big\ra .
\end{align*}
Hence we find
\begin{align}
\label{eq:sum_approx}
(S+T)\xi_n = (S+T)^*\xi_n 
&= A_n (S+T)^* \xi - \big[ S+T , A_n^* \big]^* \xi .
\end{align}
By Lemma \ref{lem:approx_id_1}, the first term converges strongly to $(S+T)^*\xi$. 
Furthermore, on $\Dom(S+T)$ we have the equality 
\[
\big[ S+T , A_n^* \big]^* = - \big[ S+T , A_n \big] .
\]
Since both sides of this equality are bounded and adjointable, the left-hand-side equals the closure of the right-hand-side on all of $E$. Hence we know from Proposition \ref{prop:approx_id} that the second term in Eq.\ \eqref{eq:sum_approx} converges to zero. Thus $(S+T)\xi_n$ converges, which proves that $\xi\in\Dom(\bar{S+T})$. 
\end{proof}

Again, one can try to apply the local-global principle to prove regularity of (the closure of) $S+T$. However, since we will not need regularity in the remainder of this article, we will not pursue this any further.

\section{A decomposition of the Dirac operator}
\label{sec:Dirac_decomp}

In this section we will describe a decomposition of the Dirac operator as a sum of two weakly anti-commuting operators. 
Let $(M,g)$ be an $n$-dimensional Riemannian spin manifold with the spinor bundle $\bS$. 
Suppose that we have an \emph{orthogonal} decomposition $TM = E_1\oplus E_2$, where $E_1$ and $E_2$ are oriented subbundles of ranks $n_1$ and $n_2$, respectively. 
Locally, we can consider oriented orthonormal frames $\{e_1,\ldots,e_n\}$ of $TM$ such that $e_j\in\Gamma^\infty(E_1)$ for $1\leq j\leq n_1$ and $e_j\in\Gamma^\infty(E_2)$ for $n_1+1\leq j\leq n_1+n_2=n$. 

We consider the Clifford representation $\gamma\colon\Gamma^\infty(TM)\to\Gamma^\infty(\End(\bS))$ (our conventions are such that $\gamma(v)^2=-g(v,v)$ and $\gamma(v)^*=-\gamma(v)$). The Dirac operator $\sD$ on $\Gamma_c^\infty(\bS)$ is given locally by
\[
\sD = \sum_{j=1}^n \gamma(e_j) \nabla^\bS_{e_j} .
\] 
We define a self-adjoint unitary operator $\Gamma_1 \in \Gamma^\infty(\End(\bS))$ on $L^2(S)$ which is locally given by
\[
\Gamma_1 := i^{n_1(n_1+1)/2} \gamma(e_1)\cdots\gamma(e_{n_1}) , 
\]
where $\{e_1,\ldots,e_{n_1}\}$ is a local oriented orthonormal frame of $E_1$. 
We note that, in the case of a pseudo-Riemannian manifold, if the metric is negative-definite on $E_1$ and positive-definite on $E_2$, then the operator $(-i)^{n_1} \Gamma_1$ is the usual \emph{fundamental symmetry} which turns the Hilbert space $L^2(\bS)$ into a Krein space (see \cite[\S4.1]{vdDR16}). 
Next, we define the operators
\begin{align*}
\sD_1 &:= \frac12 \big( \sD-(-1)^{n_1}\Gamma_1\sD\Gamma_1 \big) , & 
\sD_2 &:= \frac12 \big( \sD+(-1)^{n_1}\Gamma_1\sD\Gamma_1 \big) .
\end{align*}
Then $\sD_1$ and $\sD_2$ are both symmetric operators on $\Gamma_c^\infty(\bS)$, and we have $\sD_1+\sD_2=\sD$. In terms of a local orthonormal frame (corresponding to the decomposition $TM = E_1\oplus E_2$), we have the explicit expressions
\begin{align*}
\sD_1 &= \sum_{j=1}^{n_1} \Big( \gamma(e_j) \nabla^\bS_{e_j} - \frac12 \gamma(e_j) \big[ \nabla^\bS_{e_j} , \Gamma_1 \big] \Gamma_1 \Big) + \sum_{k=n_1+1}^n \frac12 \gamma(e_k) \big[ \nabla^\bS_{e_k} , \Gamma_1 \big] \Gamma_1 , \\
\sD_2 &= \sum_{j=1}^{n_1} \frac12 \gamma(e_j) \big[ \nabla^\bS_{e_j} , \Gamma_1 \big] \Gamma_1 + \sum_{k=n_1+1}^n \Big( \gamma(e_k) \nabla^\bS_{e_k} - \frac12 \gamma(e_k) \big[ \nabla^\bS_{e_k} , \Gamma_1 \big] \Gamma_1 \Big) .
\end{align*}

\begin{prop}
\label{prop:diff_weak_anti-comm}
Assume that $M$ is complete and has bounded geometry. Then (the closures of) the operators $\sD_1$ and $\sD_2$ are self-adjoint and weakly anticommuting. 
\end{prop}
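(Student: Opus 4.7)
The plan is to proceed in three stages: establish essential self-adjointness, compute the anticommutator, and verify the two conditions of Definition~\ref{defn:weakly_(anti)commuting}.

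First, each $\sD_i$ is symmetric on $\Gamma_c^\infty(\bS)$ with principal symbol $\xi\mapsto i\gamma(\pi_i\xi^\sharp)$, where $\pi_i$ denotes orthogonal projection onto $E_i$. Since $\|\sigma_{\sD_i}(\xi)\|\le\|\xi\|_g$, the operator $\sD_i$ has propagation speed at most one, so by Chernoff's theorem (using completeness) it is essentially self-adjoint; I identify $\sD_i$ with its self-adjoint closure.

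Next, using $\sD_1+\sD_2=\sD$ and $\sD_1-\sD_2=-(-1)^{n_1}\Gamma_1\sD\Gamma_1$, polarisation gives on $\Gamma_c^\infty(\bS)$
\[
\{\sD_1,\sD_2\} = \tfrac{1}{2}\bigl((\sD_1+\sD_2)^2-(\sD_1-\sD_2)^2\bigr) = \tfrac{1}{2}\bigl(\sD^2-\Gamma_1\sD^2\Gamma_1\bigr) = \tfrac{1}{2}[\sD^2,\Gamma_1]\Gamma_1.
\]
The Lichnerowicz formula $\sD^2=\nabla^*\nabla+R/4$ (with $R$ the scalar curvature) reduces this to $\tfrac{1}{2}[\nabla^*\nabla,\Gamma_1]\Gamma_1$, and since $\nabla^*\nabla$ has scalar principal symbol the commutator is a first-order differential operator whose coefficients (involving $\nabla\Gamma_1$ and $\nabla^2\Gamma_1$) are uniformly bounded by the bounded-geometry hypothesis. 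Splitting its first-order part along $TM=E_1\oplus E_2$ yields a decomposition $\{\sD_1,\sD_2\}=A_1+A_2$, in which $A_i$ only differentiates in $E_i$-directions (plus bounded zeroth-order contributions).

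For condition (1) of Definition~\ref{defn:weakly_(anti)commuting}, a uniform G\aa{}rding inequality $\|\nabla_{E_i}\psi\|\le C(\|\sD_i\psi\|+\|\psi\|)$---valid because $\sD_i^2$ has principal symbol $\|\pi_i\xi^\sharp\|^2$ and bounded geometry gives uniform control---implies that $A_i(\sD_i+i)^{-1}$ is bounded, and the required estimate $\|\{\sD_1,\sD_2\}\psi\|^2\le C'(\|\psi\|^2+\|\sD_1\psi\|^2+\|\sD_2\psi\|^2)$ follows at once. For condition (2), I would adapt the argument of Lemma~\ref{lem:diff_op_weakly_commute} with the anticommutator identity $\{\sD_1,\sD_2\}=A_1+A_2$ replacing the commutator identity; equivalently, one can invoke the doubling trick sketched after Theorem~\ref{thm:sum_sa} and apply Lemma~\ref{lem:diff_op_weakly_commute} to a doubled pair on $L^2(\bS)\oplus L^2(\bS)$.

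The hard part will be the domain condition (2): the anticommutator identity $\{\sD_1+\lambda,\sD_2+\mu\}=\{\sD_1,\sD_2\}+2\lambda\sD_2+2\mu\sD_1+2\lambda\mu$ picks up extra terms linear in $\sD_i$ that are absent in the commutator analogue. These must be absorbed by taking $|\lambda|$ sufficiently large, with bounded geometry keeping the perturbations under uniform control; once established for the core $\E=\Gamma_c^\infty(\bS)$, condition (2) extends to all of $\Dom\sD_2$ by \cite[Proposition 3.5]{LM18pre}.
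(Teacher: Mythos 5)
Your proposal is correct and follows essentially the same route as the paper: establish self-adjointness from completeness, observe that $\{\sD_1,\sD_2\}$ is first-order, split it as $A_1+A_2$ with each $A_i$ differentiating only in $E_i$-directions and with bounded coefficients (via bounded geometry), and then invoke the doubling trick together with Lemma~\ref{lem:diff_op_weakly_commute}. The one noteworthy variation is your identification $\{\sD_1,\sD_2\}=\tfrac12[\sD^2,\Gamma_1]\Gamma_1=\tfrac12[\nabla^*\nabla,\Gamma_1]\Gamma_1$ via polarisation and Lichnerowicz, which makes concrete the paper's terse remark that the anticommutator is first-order because the principal symbols of $\sD_1$ and $\sD_2$ anticommute; this is a nice explicit computation but does not change the structure of the argument, and your closing observation that the anticommutator version of condition (2) is awkward (hence the doubling trick) is exactly the point the paper's proof is also relying on.
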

\begin{proof}
The completeness of $M$ implies that the symmetric operators $\sD_1$ and $\sD_2$ are self-adjoint. 
The assumption of bounded geometry ensures that the coefficients of $\sD_1$ and $\sD_2$ are globally bounded. 
Since both $\sD_1+\sD_2$ and $\sD_1-\sD_2$ are elliptic, there exists $C>0$ such that for any $\psi\in\Dom\sD_1\cap\Dom\sD_2$ we have  
\[
\|\psi\| + \| (\sD_1\pm\sD_2) \psi \| \leq \|\psi\| + \|\sD_1\psi\| + \|\sD_2\psi\| \leq C \|\psi\|_{\sD_1\pm\sD_2} . 
\]
where $\|\cdot\|_{\sD_1\pm\sD_2}$ denotes the graph norm of $\sD_1\pm\sD_2$. Thus the graph norm of $\sD_1\pm\sD_2$ is equivalent to the combined graph norm $\|\cdot\|_{\sD_1,\sD_2}$, and we have the equality $\Dom\sD_1\cap\Dom\sD_2 = \Dom(\sD_1\pm\sD_2)$. 
Since the principal symbols of $\sD_1$ and $\sD_2$ anticommute, we know that the anticommutator $[\sD_1,\sD_2]_+$ is a first-order differential operator, which (by bounded geometry) has globally bounded coefficients. 
Using again ellipticity of $\sD_1\pm\sD_2$ and the equivalence $\|\cdot\|_{\sD_1\pm\sD_2} \sim \|\cdot\|_{\sD_1,\sD_2}$, there exists a $C'>0$ such that for all $\psi \in \Dom[\sD_1,\sD_2]_\pm$ we have the inequality 
\[
\big\| [\sD_1,\sD_2]_+ \psi \big\| \leq C' \|\psi\|_{\sD_1,\sD_2} . 
\]
It follows that condition (1) of Definition \ref{defn:weakly_(anti)commuting} is satisfied. 

To prove the domain condition (2), we will make use of a clever Clifford matrix trick, inspired by the proof of \cite[Theorem 5.1]{LM19}. Consider the operators 
\begin{align*}
S &:= \mattwo{0}{\sD_1}{\sD_1}{0} , & 
T &:= \mattwo{\sD_1+\sD_2}{0}{0}{\sD_2-\sD_1} .
\end{align*}
We note that the combined graph norm $\|\cdot\|_{S,T}$ is equivalent to the graph norm of the elliptic operator $T$, and the anticommutator 
\[
[S,T]_+ = \mattwo{0}{{}[\sD_1,\sD_2]_+}{{}[\sD_1,\sD_2]_+}{0} 
\]
is relatively bounded by $T$. Hence $S$ and $T$ satisfy condition (1) of Definition \ref{defn:weakly_(anti)commuting}. 
Furthermore, using again the assumption of bounded geometry, we note that $\Dom T$ is a core for $S$, and that the second-order differential operators $ST$ and $TS$ are well-defined on the domain $(T\pm i)^{-1} \cdot \Dom T = \Dom T^2$ of the second-order \emph{elliptic} operator $T^2$. So $S$ and $T$ also satisfy condition (2) of Definition \ref{defn:weakly_(anti)commuting}. 
Thus $S$ and $T$ weakly anticommute, and it follows from \cite[Theorem 2.6.(2)]{LM19} that we also have the domain condition
\[
(S\pm i)^{-1} \cdot \Dom T = \Dom[S,T]_\pm .
\]
Rephrasing this in terms of $\sD_1$ and $\sD_2$, we obtain
\begin{multline*}
(\sD_1\pm i)^{-1} \cdot (\Dom\sD_1\cap\Dom\sD_2) \\
= \left\{ \psi\in\Dom\sD_1\cap\Dom\sD_2 : \sD_1\psi\in\Dom\sD_1\cap\Dom\sD_2 , \; \sD_2\psi \in \Dom\sD_1 \right\} . 
\end{multline*}
Since $\Dom\sD_1\cap\Dom\sD_2$ is a core for $\sD_2$, this proves that $\sD_1$ and $\sD_2$ also satisfy condition (2) of Definition \ref{defn:weakly_(anti)commuting}. 
\end{proof}

\section{Indefinite Kasparov modules}
\label{sec:indefinite}

First, let us briefly recall our notion of (reverse) `Wick rotations' \cite[Definitions 2.2 \& 2.5]{vdDR16}. Given a closed operator $\D$ on a Hilbert $B$-module $E$ such that $\Dom\D\cap\Dom\D^*$ is dense, we define the \emph{real and imaginary parts} of $\D$ as the closures of 
\begin{align*}
\Re\D &:= \frac12 (\D+\D^*) , & \Im\D &:= -\frac i2 (\D-\D^*) ,
\end{align*}
on the initial domain $\Dom\D\cap\Dom\D^*$. 
Furthermore, we define the \emph{`Wick rotations'} of $\D$ as the closures of 
\begin{align*}
\D_+ &:= \Re\D + \Im\D , & \D_- &:= \Re\D - \Im\D ,
\end{align*}
on the initial domain $\Dom\Re\D\cap\Dom\Im\D$. 

Conversely, given two closed symmetric operators $\D_1$ and $\D_2$ on $E$ such that $\Dom\D_1\cap\Dom\D_2$ is dense, we define the \emph{reverse Wick rotation} of the pair $(\D_1,\D_2)$ as the closure of
$$
\D := \frac12 (\D_1+\D_2) + \frac i2 (\D_1-\D_2) 
$$
on the initial domain $\Dom\D_1\cap\Dom\D_2$. 

We now replace our former definitions of indefinite Kasparov modules \cite[Definition 3.1]{vdDR16} and pairs of Kasparov modules \cite[Definition 3.6]{vdDR16}, using the weaker notion of weakly anticommuting operators described above. 

\begin{defn}
\label{defn:indef_Kasp_mod}
Given (separable) $\Z_2$-graded $C^*$-algebras $A$ and $B$, an \emph{indefinite} unbounded Kasparov $A$-$B$-module $(\A,E_B,\D)$ is given by
\begin{itemize}
\item a $\Z_2$-graded, countably generated, right Hilbert $B$-module $E$;
\item a $\Z_2$-graded $*$-homomorphism $\pi\colon A\to\End_B(E)$;
\item a separable dense $*$-subalgebra $\A\subset A$;
\item a closed odd operator $\D\colon\Dom\D\subset E\to E$ such that 
\begin{enumerate}
\item there exists a linear subspace $\E\subset\Dom\D\cap\Dom\D^*$ which is dense with respect to $\|\cdot\|_{\D,\D^*}$, and which is a core for both $\D$ and $\D^*$; 
\item the operators $\Re\D$ and $\Im\D$ are regular and essentially self-adjoint on $\E$; 
\item the operators $\Re\D$ and $\Im\D$ are \emph{weakly anticommuting};
\item we have the inclusion $\pi(\A)\cdot\E\subset\Dom\D\cap\Dom\D^*$, and the graded commutators $[\D,\pi(a)]_\pm$ and $[\D^*,\pi(a)]_\pm$ are bounded on $\E$ for each $a\in\A$;
\item the map $\pi(a)\circ\iota\colon \Dom\D\cap\Dom\D^*\into E\to E$ is compact for each $a\in A$, where $\iota\colon \Dom\D\cap\Dom\D^*\into E$ denotes the natural inclusion map, and $\Dom\D\cap\Dom\D^*$ is considered as a Hilbert $B$-module with the inner product $\la\cdot|\cdot\ra_{\D,\D^*}$.
\end{enumerate}
\end{itemize}
If $B=\C$ and $A$ is trivially graded, we will write $E=\mH$ and refer to $(\A,\mH,\D)$ as an even \emph{indefinite spectral triple} over $A$. 
\end{defn}

\begin{remark}
In contrast with \cite[Definition 3.1]{vdDR16}, we no longer assume that $\D$ is regular (i.e.\ that $1+\D^*\D$ has dense range), since this assumption is not used anywhere. 
\end{remark}

\begin{defn}
\label{defn:even_pair}
We say $(\A,E_B,\D_1,\D_2)$ is a \emph{pair of unbounded Kasparov $A$-$B$-modules} if $(\A,E_B,\D_1)$ and $(\A,E_B,\D_2)$ are unbounded Kasparov $A$-$B$-modules such that:
\begin{enumerate}
\item there exists a linear subspace $\E\subset\Dom\D_1\cap\Dom\D_2$ which is a common core for $\D_1$ and $\D_2$;
\item the operators $\D_1+\D_2$ and $\D_1-\D_2$ are regular and essentially self-adjoint on $\E$; 
\item the operators $\D_1+\D_2$ and $\D_1-\D_2$ are weakly anticommuting. 
\end{enumerate}
If $B=\C$ and $A$ is trivially graded, we will write $E=\mH$ and refer to $(\A,\mH,\D_1,\D_2)$ as an even \emph{pair of spectral triples} over $A$. 
\end{defn}

Using Theorem \ref{thm:sum_sa} instead of \cite[Corollary 2.12]{vdDR16}, the proof of \cite[Proposition 3.8]{vdDR16} carries through. Furthermore, using Proposition \ref{prop:sum_ess_sa} instead of \cite[Proposition 2.13]{vdDR16}, the proof of \cite[Proposition 3.9]{vdDR16} also carries through. Thus we still have an equivalence between indefinite Kasparov modules and pairs of Kasparov modules. 

\begin{thm}[cf.\ {\cite[Theorem 3.11]{vdDR16}}]
The procedure of (reverse) Wick rotation implements a bijection between indefinite unbounded Kasparov $A$-$B$-modules $(\A,E_B,\D)$ and pairs of unbounded Kasparov $A$-$B$-modules $(\A,E_B,\D_1,\D_2)$. 
This bijection also descends to the corresponding unitary equivalence classes.
\end{thm}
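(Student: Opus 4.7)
The plan is to decompose the claim into the two directions of the bijection (forward Wick rotation and reverse Wick rotation) plus a check that the two operations are mutually inverse and respect unitary equivalence. Both directions mirror Propositions 3.8 and 3.9 of \cite{vdDR16}, with the classical regularity and self-adjointness inputs replaced by Theorem \ref{thm:sum_sa} and Proposition \ref{prop:sum_ess_sa} respectively.

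For the forward direction, given an indefinite Kasparov module $(\A, E_B, \D)$, I would set $\D_1 := \overline{\Re\D + \Im\D}$ and $\D_2 := \overline{\Re\D - \Im\D}$. Weak anticommutativity of $(\Re\D, \pm\Im\D)$, guaranteed by axiom (3) of Definition \ref{defn:indef_Kasp_mod}, combined with Theorem \ref{thm:sum_sa}, yields that $\D_1$ and $\D_2$ are regular and self-adjoint with common domain $\Dom\Re\D \cap \Dom\Im\D$. The bounded-commutator axiom for each $\D_j$ reduces algebraically to the boundedness of $[\D, \pi(a)]_\pm$ and $[\D^*, \pi(a)]_\pm$ from axiom (4), and the compact-resolvent condition transfers via the continuous embedding of $\Dom\Re\D \cap \Dom\Im\D$ into $\Dom\D \cap \Dom\D^*$ in their respective combined graph norms together with a standard resolvent argument. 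The pair axiom that $\D_1 \pm \D_2$ be weakly anticommuting is then immediate, since $\D_1 + \D_2 = 2\Re\D$ and $\D_1 - \D_2 = 2\Im\D$.

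For the reverse direction, I would define $\D$ as the closure on $\Dom\D_1 \cap \Dom\D_2$ of $\frac{1}{2}(\D_1+\D_2) + \frac{i}{2}(\D_1-\D_2)$. Proposition \ref{prop:sum_ess_sa}, applied separately to $(\D_1, \D_2)$ and to $(\D_1, -\D_2)$ (both weakly anticommuting by axiom (3) of Definition \ref{defn:even_pair}), gives essential self-adjointness of $\D_1 + \D_2$ and $\D_1 - \D_2$ on the common core, hence essential self-adjointness of $\Re\D$ and $\Im\D$, supplying axioms (1) and (2) of Definition \ref{defn:indef_Kasp_mod}. Weak anticommutativity of $\Re\D$ and $\Im\D$ follows by scaling. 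The remaining axioms (4) and (5) transfer from the Kasparov module axioms for $\D_1$ and $\D_2$ by the same algebraic rewriting used in the forward direction.

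That the two operations are mutually inverse is then formal: $(\D_1, \D_2) \mapsto \D \mapsto (\overline{\Re\D + \Im\D}, \overline{\Re\D - \Im\D})$ returns the original pair because $\Re\D = \frac{1}{2}(\D_1 + \D_2)$ and $\Im\D = \frac{1}{2}(\D_1 - \D_2)$, while the opposite composition returns $\D$ since axiom (1) of Definition \ref{defn:indef_Kasp_mod} ensures the Wick rotations are computed on a common core for both $\D$ and $\D^*$. Descent to unitary equivalence classes is automatic: any unitary $U$ of Hilbert $B$-modules intertwining $\D$ with $\D'$ intertwines their real and imaginary parts, hence the associated pairs, and conversely any unitary intertwining $\D_j$ with $\D_j'$ for $j=1,2$ intertwines the reverse Wick rotations. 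The main technical burden, which is entirely imported from \cite{vdDR16}, is the careful bookkeeping of domains, cores, and combined-graph-norm inclusions across the two constructions; the substitution of the two new analytic tools is designed precisely so that this bookkeeping passes through without modification, which is the point emphasised by the authors' remark preceding the theorem.
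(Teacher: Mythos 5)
Your high-level strategy matches the paper's: the paper's proof is precisely the one-paragraph remark that the arguments of Propositions~3.8 and~3.9 in \cite{vdDR16} carry through verbatim once \cite[Corollary~2.12]{vdDR16} is replaced by Theorem~\ref{thm:sum_sa} and \cite[Proposition~2.13]{vdDR16} is replaced by Proposition~\ref{prop:sum_ess_sa}. Your forward direction is essentially right. However, your reverse direction contains a genuine error in how you invoke Proposition~\ref{prop:sum_ess_sa}.

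Specifically, you write that Proposition~\ref{prop:sum_ess_sa}, ``applied separately to $(\D_1,\D_2)$ and to $(\D_1,-\D_2)$ (both weakly anticommuting by axiom (3) of Definition~\ref{defn:even_pair}), gives essential self-adjointness of $\D_1+\D_2$ and $\D_1-\D_2$.'' This is wrong on three counts. First, Proposition~\ref{prop:sum_ess_sa} is stated for weakly \emph{commuting} operators, not weakly anticommuting ones; you have swapped the hypothesis. Second, axiom (3) of Definition~\ref{defn:even_pair} does not say that $\D_1$ and $\D_2$ (or $\D_1$ and $-\D_2$) are weakly anticommuting; it says that $\D_1+\D_2$ and $\D_1-\D_2$ are weakly anticommuting, which is a different pair of operators. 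Third, the conclusion you aim for --- that $\D_1+\D_2$ and $\D_1-\D_2$ are essentially self-adjoint on $\E$ --- is already axiom (2) of Definition~\ref{defn:even_pair}, so even a correct invocation here would be circular. The actual role of Proposition~\ref{prop:sum_ess_sa} in the reverse direction is subtler: it is needed to control the closure of $\D$ and $\D^*$ built from $\D_1$ and $\D_2$, e.g.\ to verify the core/domain compatibilities in axiom~(1) of Definition~\ref{defn:indef_Kasp_mod}, where a sum of genuinely weakly \emph{commuting} operators appears. You have not identified where the commuting (as opposed to anticommuting) regime actually enters, and as written this step of your argument would not go through.
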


The main advantage of the new version of indefinite Kasparov modules is that the definition now incorporates any pseudo-Riemannian manifold (with only mild assumptions). 

\begin{prop}
Let $(M,g)$ be an $n$-dimensional time- and space-oriented pseudo-Riemannian spin manifold of signature $(t,s)$, with a given spinor bundle $\bS\to M$. 
Let $r$ be a spacelike reflection, such that the associated Riemannian metric $g_r$ is complete. 
Assume furthermore that $(M,g,r,\bS)$ has bounded geometry (as in \cite[Definition 4.1]{vdDR16}). 
Then the canonical Dirac operator $\sD$ on $\bS\to M$ yields an indefinite spectral triple $(C_c^\infty(M), L^2(\bS), \sD)$. 
\end{prop}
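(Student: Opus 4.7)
The plan is to invoke the analysis of [vdDR16, \S4.1.2], where essentially all of conditions (1), (2), (4), and (5) of Definition \ref{defn:indef_Kasp_mod} were already verified under precisely the present hypotheses of completeness of $g_r$ and bounded geometry. The only new ingredient is the \emph{weak} anticommutativity of $\Re\sD$ and $\Im\sD$ in condition (3), which replaces the stronger almost-anticommutativity used previously and is exactly the improvement enabled by \cite{LM18pre}. So after quoting the earlier paper for (1), (2), (4), (5), my whole task is to establish condition (3) by mimicking the argument of Section \ref{sec:Dirac_decomp}. I would take $\E=\Gamma_c^\infty(\bS)$ throughout.

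For condition (3) I use the orthogonal splitting $TM = E_+\oplus E_-$ into the $(\pm1)$-eigenbundles of the spacelike reflection $r$: $g$ is positive-definite on $E_+$ and negative-definite on $E_-$, and $g_r$ is the associated Riemannian metric. Relative to a local $g_r$-orthonormal frame adapted to this splitting, the standard formula for $\sD$ shows that $\Re\sD$ is, up to zeroth-order terms, the Clifford contraction of the spacelike part of $\nabla^\bS$, while $\Im\sD$ is the Clifford contraction of the timelike part — mirroring the $\sD_1,\sD_2$ decomposition of Section \ref{sec:Dirac_decomp}, with the factor of $i$ in the definition of $\Im\sD$ absorbed into the Krein/fundamental-symmetry structure. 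In particular the principal symbols of $\Re\sD$ and $\Im\sD$ anticommute, so $\{\Re\sD,\Im\sD\}$ is a first-order differential operator that can be written as $A_1+A_2$, where $A_1$ involves only derivatives along $E_+$ and $A_2$ only derivatives along $E_-$. The bounded geometry assumption of [vdDR16, Definition 4.1] forces the coefficients of $A_1$ and $A_2$ to be uniformly bounded (with respect to $g_r$), so that $A_1(\Re\sD\pm i)^{-1}$ and $A_2(\Im\sD\pm i)^{-1}$ are bounded on $L^2(\bS)$. Applying Lemma \ref{lem:diff_op_weakly_commute} after the standard doubling trick (exactly as in the Riemannian proposition of Section \ref{sec:Dirac_decomp}) then yields weak anticommutativity.

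The main obstacle is not the structural argument but the verification that the coefficients of $A_1$ and $A_2$ are genuinely globally bounded in terms of the $g_r$-bounded-geometry data. In the Riemannian setting of Section \ref{sec:Dirac_decomp} this was automatic, but here the Dirac operator is built from the indefinite metric $g$ while the bounded-geometry hypothesis and the relevant $L^2$-inner product are formulated via $g_r$ and the reflection $r$. One must check that the transition between the $g$-Clifford structure and the $g_r$-Clifford structure — which amounts to conjugation by (bundle endomorphisms built from) $r$ and $\Gamma_r$ — preserves uniform boundedness of all derivatives appearing in $A_1$ and $A_2$. This control is exactly what the bounded-geometry notion in [vdDR16, Definition 4.1] is designed to provide, so the domain condition (2) of Definition \ref{defn:weakly_(anti)commuting} then follows along the same lines as in the proof of Lemma \ref{lem:diff_op_weakly_commute}, completing the verification.
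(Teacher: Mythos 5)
Your proposal matches the paper's intended proof, which is only sketched: the paper states (in the paragraph preceding the proposition) that conditions (1), (2), (4), (5) are as in \cite[\S 4.1.2]{vdDR16}, and that the only new thing to verify is condition (3) — weak anticommutativity of $\Re\sD$ and $\Im\sD$ — which follows by "the same arguments as in Section~\ref{sec:Dirac_decomp}." You reproduce exactly this structure, and in fact supply more detail than the paper does, in particular by flagging the translation between the $g$-Clifford and $g_r$-Clifford structures needed to transfer the bounded-geometry control from the Riemannian data $(g_r, r, \bS)$ to the coefficients of $A_1$ and $A_2$; that point is genuine and is implicitly delegated to \cite[\S 4.1.2]{vdDR16} in the paper's sketch. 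One small imprecision worth tightening: the phrase "with the factor of $i$ in the definition of $\Im\sD$ absorbed into the Krein/fundamental-symmetry structure" is vague — what actually happens is that $\Re\sD$ and $\Im\sD$ are both symmetric first-order operators whose principal symbols are (up to a uniform factor of $i$) Clifford multiplication along $E_+$ and $E_-$ respectively, so they anticommute exactly as the symbols of $\sD_1$, $\sD_2$ in Section~\ref{sec:Dirac_decomp} do; no Krein structure is needed for this, only the usual doubling trick to pass from anticommutators to commutators before invoking Lemma~\ref{lem:diff_op_weakly_commute}.
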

\begin{proof}
The main thing to check is that $\Re\sD$ and $\Im\sD$ weakly anticommute (\emph{cf.}\ \cite[\S4.1.2]{vdDR16}), which is done using the same arguments as in Section \ref{sec:Dirac_decomp}. 
Indeed, using a local oriented orthonormal frame $\{e_j\}$ corresponding to the decomposition $TM = E_t\oplus E_s$ as the direct sum of a timelike and a spacelike subbundle (of rank $t$ and $s=n-t$, respectively), 
and defining the fundamental symmetry $\mJ_M := i^{t(t-1)/2} \gamma(e_1)\cdots\gamma(e_t)$, 
we have the explicit expressions \cite[\S4.1.2]{vdDR16}
\begin{align*}
\Re\sD &= \sum_{j=t+1}^n \gamma(e_j) \nabla^\bS_{e_j} + \frac12 \sum_{j=1}^n \gamma(e_j) \mJ_M \big[\nabla^\bS_{e_j},\mJ_M\big] , \\
\Im\sD &= i \sum_{j=1}^t \gamma(e_j) \nabla^\bS_{e_j} + \frac i2 \sum_{j=1}^n \gamma(e_j) \mJ_M \big[\nabla^\bS_{e_j},\mJ_M\big].
\end{align*}
We observe that $\Re\sD$ and $\Im\sD$ are symmetric operators whose principal symbols anti-commute, and that $\Re\sD+\Im\sD$ and $\Re\sD-\Im\sD$ are both elliptic. Hence the same argument as in the proof of Proposition \ref{prop:diff_weak_anti-comm} applies, and it follows that $\Re\sD$ and $\Im\sD$ are self-adjoint and weakly anti-commuting. 
\end{proof}

\newpage
\appendix

\section{An approximate identity for weakly commuting operators}

\begin{center}
\textbf{Based on unpublished notes by Matthias Lesch and Bram Mesland}
\end{center}
\vspace{\baselineskip}

This Appendix contains the proof of Proposition \ref{prop:approx_id}.
The argument is due to Matthias Lesch and Bram Mesland, and the author kindly thanks them for their permission to reproduce their proof here. 

To remind ourselves, and for the convenience of the reader, we recall the following facts:
\begin{itemize}
\item It is a consequence of the Banach-Steinhaus Theorem that a strongly convergent sequence of bounded operators on a Banach space is uniformly norm bounded. 
\item Given a uniformly bounded sequence $(A_n) \subset \mL(X)$ of operators on a Banach space $X$, then for $(A_n)$ being strongly continuous it suffices to show pointwise convergence on a dense subspace. 
\item As a consequence of uniform boundedness, if $(A_n)$ converges strongly to $A$ and if $(B_n)$ converges strongly to $B$, then $(A_n\cdot B_n)$ converges strongly to $A\cdot B$. 
\end{itemize}
These facts will be used repeatedly without mentioning.

Now let $S$ and $T$ be weakly commuting, regular self-adjoint operators on $E$, and denote by $[S,T] = [S,T]_- = ST-TS$ the ordinary commutator. Let $\lambda,\mu\in i\R$ with $|\lambda|,|\mu|\geq\lambda_0$. 

\begin{lem}
\label{lem:approx_id_1}
The operators $\lambda (S+\lambda)^{-1}$, $\lambda (T+\lambda)^{-1}$, and $\lambda^2 (S+\lambda)^{-1} (T+\lambda)^{-1}$ converge strongly to the identity as $|\lambda|\to\infty$. 
\end{lem}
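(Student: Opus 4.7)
The plan is to prove the three strong-convergence statements in order, using only standard resolvent estimates for regular self-adjoint operators on Hilbert modules, plus the bullet points recalled at the top of the appendix; the weak-commutation hypothesis is not required here.

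First I would handle $\lambda(S+\lambda)^{-1}$. For any regular self-adjoint $S$ and $\lambda\in i\R$ with $|\lambda|\geq\lambda_0$, we have the bound $\|(S+\lambda)^{-1}\|\leq |\lambda|^{-1}$, so $\|\lambda(S+\lambda)^{-1}\|\leq 1$ uniformly in $\lambda$. For $x\in\Dom S$ the resolvent identity gives
\[
\lambda(S+\lambda)^{-1}x - x = -S(S+\lambda)^{-1}x = -(S+\lambda)^{-1}Sx ,
\]
whence $\|\lambda(S+\lambda)^{-1}x - x\|\leq |\lambda|^{-1}\|Sx\|\to 0$ as $|\lambda|\to\infty$. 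Since $\Dom S$ is dense in $E$ and the family is uniformly bounded, the second bullet point at the top of the appendix upgrades this pointwise convergence to strong convergence on all of $E$.

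Next, the identical argument applied to $T$ yields that $\lambda(T+\lambda)^{-1}\to 1$ strongly as $|\lambda|\to\infty$, again with $\|\lambda(T+\lambda)^{-1}\|\leq 1$.

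Finally, for the product we simply factor
\[
\lambda^2 (S+\lambda)^{-1}(T+\lambda)^{-1} = \bigl(\lambda(S+\lambda)^{-1}\bigr)\cdot\bigl(\lambda(T+\lambda)^{-1}\bigr) ,
\]
and apply the third bullet point: both factors are uniformly bounded and converge strongly to the identity, so their product converges strongly to the identity. There is no real obstacle in this lemma — the only mild subtlety is remembering that passing from convergence on a dense subspace to strong convergence on $E$ requires the uniform bound, which is supplied by the resolvent estimate.
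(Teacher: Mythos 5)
Your proof is correct and follows essentially the same route as the paper: both establish the uniform bound from $\|(S+\lambda)^{-1}\|\leq|\lambda|^{-1}$, verify pointwise convergence on the dense domain $\Dom S$ via the identity $\lambda(S+\lambda)^{-1}-1=-(S+\lambda)^{-1}S$, upgrade to strong convergence using the uniform-boundedness bullet, and treat the product as a strong limit of products of strongly convergent uniformly bounded families. No meaningful differences.
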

\begin{proof}
The family $\big(S(S+\lambda)^{-1}\big)_{|\lambda|\geq\lambda_0}$ is uniformly bounded, and for $\psi\in\Dom S$ we have $\|S(S+\lambda)^{-1} \psi\| = \|(S+\lambda)^{-1} S\psi\| \leq \frac1{|\lambda|} \|S\psi\|$. Hence $S(S+\lambda)^{-1}$ converges to zero strongly. Consequently, $\lambda(S+\lambda)^{-1} = 1 - S(S+\lambda)^{-1}$ converges strongly to the identity. 
Thus the product $\lambda (S+\lambda)^{-1} \lambda (T+\lambda)^{-1}$ also converges strongly to the identity. 
\end{proof}

\begin{lem}
\label{lem:approx_id_2}
For $\lambda_0$ large enough and for $\lambda,\mu\in i\R$ with $|\lambda|,|\mu|>\lambda_0$, 
the operator families $[S,T] (S+\lambda)^{-1} (T+\mu)^{-1}$ and $[S,T] (T+\lambda)^{-1} (S+\mu)^{-1}$ converge strongly to zero as $|\lambda|\to\infty$ (for fixed $\mu$). 
\end{lem}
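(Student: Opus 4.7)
My plan is to combine uniform operator-norm boundedness of the family on all of $E$ with pointwise convergence to zero on a dense subspace; by the three facts recalled at the start of the appendix this upgrades to strong convergence on $E$. Since $[S,T] = -[T,S]$ and the weakly commuting hypotheses of Definition \ref{defn:weakly_(anti)commuting} hold symmetrically in $S$ and $T$ (by the remark following that definition), it is enough to treat the first family; the second follows by interchanging the roles of $S$ and $T$.

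For $\psi \in E$, set $\phi := (T+\mu)^{-1}\psi \in \Dom T$ and $y_\lambda := (S+\lambda)^{-1}\phi$. Condition~(2) of Definition \ref{defn:weakly_(anti)commuting} with $\E = \Dom T$ places $y_\lambda \in \Dom[S,T]$. Unwinding $(S+\lambda)T - T(S+\lambda) = [S,T]$ applied to $y_\lambda$ yields the workhorse identity
\[
T(S+\lambda)^{-1}\phi = (S+\lambda)^{-1} T\phi + (S+\lambda)^{-1}[S,T](S+\lambda)^{-1}\phi ,
\]
which combined with $\|T\phi\| \leq \|\psi\|$ and $\|(S+\lambda)^{-1}\| \leq |\lambda|^{-1}$ gives $\|Ty_\lambda\| \leq |\lambda|^{-1}(\|\psi\| + \|[S,T]y_\lambda\|)$. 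Plugging this together with the trivial bounds on $\|y_\lambda\|$ and $\|Sy_\lambda\|$ into condition~(1) of the same definition, namely $\|[S,T]y_\lambda\|^2 \leq C(\|y_\lambda\|^2 + \|Sy_\lambda\|^2 + \|Ty_\lambda\|^2)$, produces a self-bounding inequality of the shape $X_\lambda^2 \leq A(\mu)\|\psi\|^2 + (2C/|\lambda|^2) X_\lambda^2$. Choosing $|\lambda| \geq \lambda_0$ with $\lambda_0$ sufficiently large lets me absorb the last term, which yields uniform boundedness of $[S,T](S+\lambda)^{-1}(T+\mu)^{-1}$ on all of $E$.

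For pointwise convergence I restrict to $\psi \in \Dom S$, which is dense since $S$ is regular self-adjoint. The symmetric version of condition~(2) then places $\phi = (T+\mu)^{-1}\psi \in \Dom[S,T] \subset \Dom S$, so that $\|Sy_\lambda\| = \|(S+\lambda)^{-1} S\phi\| \leq |\lambda|^{-1}\|S\phi\| \to 0$; this is the only step that actually requires $\phi \in \Dom S$. The other two terms vanish easily: $\|y_\lambda\| \leq |\lambda|^{-1}\|\phi\| \to 0$, and $\|Ty_\lambda\| \to 0$ via the workhorse identity together with the uniform bound just established. Substituting into the weakly commuting inequality gives $\|[S,T]y_\lambda\| \to 0$ on the dense subspace $\Dom S$, and combined with uniform boundedness this upgrades to strong convergence on $E$.

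The main obstacle, as the above sketch makes visible, is the self-referential character of the uniform-boundedness step: the bound on $\|Ty_\lambda\|$ feeds $\|[S,T]y_\lambda\|$ back into the weakly commuting estimate, so one must take $|\lambda|$ large enough that the coefficient $2C/|\lambda|^2$ is strictly less than $1$ before the inequality can be solved; this is precisely what fixes the threshold $\lambda_0$ appearing in the statement. Everything else is a careful application of the resolvent identity together with the weakly commuting hypothesis.
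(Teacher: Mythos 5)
Your proof is correct, and the core mechanism is the same one the paper uses: apply condition~(1) of Definition~\ref{defn:weakly_(anti)commuting} to $y_\lambda = (S+\lambda)^{-1}(T+\mu)^{-1}\psi$, then control the three terms $\|y_\lambda\|$, $\|Sy_\lambda\|$, $\|Ty_\lambda\|$ with the resolvent (``workhorse'') identity $Ty_\lambda = (S+\lambda)^{-1}T\phi + (S+\lambda)^{-1}[S,T]y_\lambda$, so that the term $\|Ty_\lambda\|$ feeds the bound on $\|[S,T]y_\lambda\|$ back into itself. Where you diverge from the paper is instructive. The paper does not re-derive the uniform bound on $[S,T](S+\lambda)^{-1}(T+\mu)^{-1}$: it simply invokes \cite[Lemma~3.2]{LM18pre}, which already gives the stronger estimate $\|[S,T]\psi\| \leq c(|\lambda|^{-1}+|\mu|^{-1})\|(T+\mu)(S+\lambda)\psi\|$, i.e.\ decay of order $|\lambda|^{-1}$ rather than mere uniform boundedness. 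Armed with that, the paper shows each of the three terms converges to zero strongly on \emph{all} of $E$ (the $\|Sy_\lambda\|$ term via Lemma~\ref{lem:approx_id_1}, the $\|Ty_\lambda\|$ term by the two-piece decomposition in Eq.~\eqref{eq:converge_zero}), so no restriction to a dense subspace is needed at the top level. Your proof instead re-derives uniform boundedness from scratch via the self-bounding inequality — essentially a weaker, self-contained version of \cite[Lemma~3.2]{LM18pre} — and then restricts to $\psi\in\Dom S$, using the symmetric form of condition~(2) (justified by the remark after Definition~\ref{defn:weakly_(anti)commuting}) to get $\phi\in\Dom S$ and hence $\|Sy_\lambda\|\to 0$, before extending by uniform boundedness. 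Both routes are sound; yours buys self-containedness at the cost of some repeated bootstrapping, while the paper's citation of \cite[Lemma~3.2]{LM18pre} is shorter and yields a quantitative $O(|\lambda|^{-1})$ decay that is reused verbatim in Lemma~\ref{lem:approx_id_3}.
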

\begin{proof}
Since $S(S+\lambda)^{-1}$ converges strongly to zero and $(T+\mu)^{-1}$ is bounded, we know that also $S (S+\lambda)^{-1} (T+\mu)^{-1}$ converges strongly to zero as $|\lambda|\to\infty$. We will show that also $T (S+\lambda)^{-1} (T+\mu)^{-1}$ converges strongly to zero as $|\lambda|\to\infty$. We write
\begin{align}
\label{eq:converge_zero}
T (S+\lambda)^{-1} (T+\mu)^{-1} &= (S+\lambda)^{-1} [S,T] (S+\lambda)^{-1} (T+\mu)^{-1} \nonumber\\
&\quad+ (S+\lambda)^{-1} T (T+\mu)^{-1} .
\end{align}
Since $T(T+\mu)^{-1}$ is bounded, the second summand is of order $|\lambda|^{-1}$ and therefore converges in norm to zero. For the first summand, we note that by \cite[Lemma 3.2]{LM19} we have for $\lambda_0$ large enough and $\psi\in\Dom[S,T]$ that there exists $c>0$ such that 
\[
\big\| [S,T] \psi \big\| \leq c \left(\frac1{|\lambda|}+\frac1{|\mu|}\right) \big\| (T+\mu) (S+\lambda) \psi \big\| .
\]
In particular, $[S,T] (S+\lambda)^{-1} (T+\mu)^{-1}$ is uniformly bounded, so the first summand in Eq.\ \eqref{eq:converge_zero} also converges strongly to zero. 
Finally, by condition (1) of Definition \ref{defn:weakly_(anti)commuting} we have 
\begin{align*}
\big\| [S,T] (S+\lambda)^{-1} (T+\mu)^{-1} \psi \big\|^2 
\leq C^2 &\Big( \big\| (S+\lambda)^{-1} (T+\mu)^{-1} \psi \big\|^2 \\
&\quad+ \big\| S (S+\lambda)^{-1} (T+\mu)^{-1} \psi \big\|^2 
+ \big\| T (S+\lambda)^{-1} (T+\mu)^{-1} \psi \big\|^2 \Big) ,
\end{align*}
which shows that $[S,T] (S+\lambda)^{-1} (T+\mu)^{-1} \psi$ converges strongly to zero as well. 
Interchanging $S$ and $T$, the same result also applies to $[S,T] (T+\lambda)^{-1} (S+\mu)^{-1}$. 
\end{proof}

\begin{lem}
\label{lem:approx_id_3}
The operators $\lambda^2 \big[ S , (S+\lambda)^{-1} (T+\lambda)^{-1} \big]$ and $\lambda^2 \big[ T , (S+\lambda)^{-1} (T+\lambda)^{-1} \big]$ are uniformly bounded for $|\lambda|\geq\lambda_0$. 
\end{lem}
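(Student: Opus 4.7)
The plan is to reduce each commutator to the form $[S,T](S+\lambda)^{-1}(T+\mu)^{-1}$ or its adjoint, and then invoke the uniform-in-$\lambda$ bounds implicit in Lemma \ref{lem:approx_id_2}. Inspection of that lemma's proof shows in fact that $\|[S,T](S+\lambda)^{-1}(T+\mu)^{-1}\| = O(|\lambda|^{-1} + |\mu|^{-1})$, and similarly with $S$ and $T$ swapped, uniformly for $|\lambda|,|\mu|\geq\lambda_0$.

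For the $T$-commutator the approach is direct. Since $T$ commutes with $(T+\lambda)^{-1}$, the Leibniz rule plus the standard resolvent identity $[T,(S+\lambda)^{-1}] = (S+\lambda)^{-1}[S,T](S+\lambda)^{-1}$ gives, on a suitable dense subspace,
\[
\lambda^2 \big[ T , (S+\lambda)^{-1}(T+\lambda)^{-1} \big] = \lambda (S+\lambda)^{-1} \cdot \lambda\, [S,T](S+\lambda)^{-1}(T+\lambda)^{-1} .
\]
The first factor has norm at most $1$ (self-adjoint $S$, imaginary $\lambda$), and the second is $O(1)$ by the $O(|\lambda|^{-1})$ bound above. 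Hence the product is uniformly bounded.

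For the $S$-commutator the analogous computation yields
\[
\big[ S , (S+\lambda)^{-1}(T+\lambda)^{-1} \big] = -(S+\lambda)^{-1}(T+\lambda)^{-1}[S,T](T+\lambda)^{-1} ,
\]
which does not directly match the form handled by Lemma \ref{lem:approx_id_2}, because $[S,T]$ is sandwiched between two $T$-resolvents. The idea is to pass to the adjoint: using that $S$, $T$ are self-adjoint, $\lambda\in i\R$ (so $\bar\lambda=-\lambda$), and $[S,T]^*=-[S,T]$ on the initial domain, the adjoint of the right-hand side is (the closure of)
\[
(T-\lambda)^{-1} [S,T] (T-\lambda)^{-1}(S-\lambda)^{-1} .
\]
Now $[S,T]$ sits on the left, and the trailing product $(T-\lambda)^{-1}(S-\lambda)^{-1}$ is in the ordering covered by Lemma \ref{lem:approx_id_2} (with $\lambda\mapsto-\lambda$); combined with $\|(T-\lambda)^{-1}\|\leq|\lambda|^{-1}$, this gives $O(|\lambda|^{-2})$ for the adjoint, hence for the operator itself. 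Multiplying by $\lambda^2$ yields the desired uniform bound.

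The main obstacle I anticipate is the bookkeeping of domains: the commutator identities above are initially formal manipulations involving unbounded operators, and the initial domain $\Dom[S,T]$ is a priori small. They must be justified on a dense core where $(S+\lambda)^{-1}\psi$ and $(T+\lambda)^{-1}\psi$ both lie in $\Dom[S,T]$ and where $\psi$ lies in $\Dom S\cap\Dom T$ — this is precisely what condition (2) of Definition \ref{defn:weakly_(anti)commuting}, together with its symmetric counterpart noted after the definition, guarantees. Once the bound is established on such a dense subspace, it extends by continuity to all of $E$.
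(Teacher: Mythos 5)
Your proposal is correct and follows essentially the same route as the paper: expand the $T$-commutator as $(S+\lambda)^{-1}[S,T](S+\lambda)^{-1}(T+\lambda)^{-1}$ and use the $O(|\lambda|^{-1})$ bound from \cite[Lemma 3.2]{LM18pre} (the same bound invoked in Lemma~\ref{lem:approx_id_2}), then handle the $S$-commutator by passing to the adjoint and using the $S\leftrightarrow T$ symmetry. The paper phrases the $S$-case by first interchanging $S$ and $T$ in the already-proved $T$-estimate and then adjointing, whereas you expand the $S$-commutator directly and adjoint the resulting expression, but these are the same calculation read in two orders.
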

\begin{proof}
We write 
\[
\lambda^2 \big[ T , (S+\lambda)^{-1} (T+\lambda)^{-1} \big] = \lambda^2 (S+\lambda)^{-1} [S,T] (S+\lambda)^{-1} (T+\lambda)^{-1} .
\]
The first factor $\lambda^2 (S+\lambda)^{-1}$ is of order $|\lambda|$ in norm. 
The second factor $[S,T] (S+\lambda)^{-1} (T+\lambda)^{-1}$ is of order $|\lambda|^{-1}$ by \cite[Lemma 3.2]{LM19}. Hence $\lambda^2 \big[ T , (S+\lambda)^{-1} (T+\lambda)^{-1} \big]$ is uniformly bounded (in norm) for $|\lambda|\geq\lambda_0$. 
By interchanging $S$ and $T$, we see that also $\lambda^2 \big[ S , (T+\lambda)^{-1} (S+\lambda)^{-1} \big]$ is uniformly bounded (in norm). It then follows that 
\[
\lambda^2 \big[ S , (S+\lambda)^{-1} (T+\lambda)^{-1} \big] = - \left( \bar\lambda^2 \big[ S , (T+\bar\lambda)^{-1} (S+\bar\lambda)^{-1} \big] \right)^*
\]
is uniformly bounded as well. 
\end{proof}

\begin{proof}[\textbf{Proof of Proposition \ref{prop:approx_id}}]
By Lemma \ref{lem:approx_id_3} it suffices to establish strong convergence on a dense submodule of $E$. 
With $\lambda_0$ as in Lemma \ref{lem:approx_id_2}, and for some $\mu\in i\R$ with $|\mu|>\lambda_0$, we rewrite
\begin{align*}
\lambda^2 \big[ T , (S+\lambda)^{-1} (T+\lambda)^{-1} \big] 
= \lambda (S+\lambda)^{-1} \cdot [S,T] (S+\lambda)^{-1} (T+\mu)^{-1} \cdot \lambda (T+\lambda)^{-1} \cdot (T+\mu) .
\end{align*}
As $|\lambda|\to\infty$, the first and third factors converge strongly to the identity by Lemma \ref{lem:approx_id_1}, while the second factor converges strongly to zero by Lemma \ref{lem:approx_id_2}. Thus this proves that $\lambda^2 \big[ T , (S+\lambda)^{-1} (T+\lambda)^{-1} \big]$ converges strongly to zero on the dense submodule $\Dom T$, and hence on $E$. 
Next, we rewrite 
\begin{align*}
\lambda^2 \big[ S , (S+\lambda)^{-1} (T+\lambda)^{-1} \big] 
= \lambda^2 (S+\lambda)^{-1} (T+\lambda)^{-1} \cdot [T,S](T+\lambda)^{-1} (S+\mu)^{-1} \cdot (S+\mu) .
\end{align*}
Again, by Lemmas \ref{lem:approx_id_1} and \ref{lem:approx_id_2}, as $|\lambda|\to\infty$ the first factor converges strongly to the identity, while the second factor converges strongly to zero. Thus $\lambda^2 \big[ S , (S+\lambda)^{-1} (T+\lambda)^{-1} \big]$ converges strongly to zero on $\Dom S$, and hence on $E$. 
\end{proof}

\newcommand{\MR}[1]{}

%\bibliographystyle{myamsalpha}
%\bibliography{short,bibliography}

\begin{thebibliography}{{\noopsort{Dungen}}DR16}

\bibitem[{\noopsort{Dungen}}DR16]{vdDR16}
K.~{\noopsort{Dungen}}{van den}~Dungen and A.~{Rennie}, \emph{Indefinite
  {K}asparov modules and pseudo-{R}iemannian manifolds}, Ann. Henri
  Poincar{\'e} \textbf{17} (2016), no.~11, 3255--3286.

\bibitem[KL12]{KL12}
J.~Kaad and M.~Lesch, \emph{{A local global principle for regular operators in
  Hilbert {$C^{\ast} $}-modules}}, J. Funct. Anal. \textbf{262} (2012), no.~10,
  4540--4569.

\bibitem[LM19]{LM19}
M.~{Lesch} and B.~{Mesland}, \emph{{Sums of regular self-adjoint operators in
  Hilbert-{$C^*$}-modules}}, J. Math. Anal. Appl. \textbf{472} (2019),
  947--980.

\bibitem[Pie06]{Pie06}
F.~Pierrot, \emph{Op\'erateurs r\'eguliers dans les {$C^*$}-modules et
  structure des {$C^*$}-alg\`ebres de groupes de {L}ie}, J. Lie Theory
  \textbf{16} (2006), no.~4, 651--689.

\end{thebibliography}

\providecommand{\noopsort}[1]{}\providecommand{\vannoopsort}[1]{}
\providecommand{\bysame}{\leavevmode\hbox to3em{\hrulefill}\thinspace}
\providecommand{\MR}{\relax\ifhmode\unskip\space\fi MR }
% \MRhref is called by the amsart/book/proc definition of \MR.
\providecommand{\MRhref}[2]{%
  \href{http://www.ams.org/mathscinet-getitem?mr=#1}{#2}
}
\providecommand{\href}[2]{#2}

\end{document}